\newtheorem{thm}{Theorem}[section]
\newtheorem{prop}[thm]{Proposition}
\newtheorem{lem}[thm]{Lemma}
\newtheorem*{thmA}{Theorem A}
\newtheorem*{thmB}{Theorem B}
\theoremstyle{remark}
\theoremstyle{definition}
\newcommand*\isom{%
  \xrightarrow{\sim}%
}
\def\pp{\mathbb{P}}
\def\zz{\mathbb{Z}}
\def\cc{\mathbb{C}}
\def\mm{\mathcal{M}}
\def\aa{\mathcal{A}}
\def\CC{\mathcal{C}}
\def\jj{\mathcal{J}}
\def\uu{\mathcal{U}}
\def\hh{\mathcal{H}}
\def\deldelbar{\partial \overline{\partial}}
\numberwithin{equation}{section}
\begin{document}

\title[Special values of canonical Green's functions]{Special values of canonical Green's functions}

\author{Robin de Jong}

\subjclass[2010]{Primary 14H55; secondary 14H45, 14H15.}

\keywords{Canonical Green's function, hyperelliptic curves, Kawazumi-Zhang invariant, Weierstrass points.}

\begin{abstract} We give a precise formula for the value of the canonical Green's function at a pair of Weierstrass points on a hyperelliptic Riemann surface. Further we express the `energy' of the Weierstrass points in terms of a spectral invariant recently introduced by N. Kawazumi and S. Zhang. It follows that the energy is strictly larger than $\log 2$. Our results generalize known formulas for elliptic curves.
\end{abstract}

\maketitle

\thispagestyle{empty}

\section{Introduction}

Let $M$ be a compact and connected Riemann surface of genus $h \geq 1$. On $M$ one has a canonical K\"ahler form $\mu$ given as follows. The space $H$ of holomorphic differentials on $M$ is a complex vector space of dimension $h$. It carries a natural hermitian inner product given by $\langle \alpha, \beta \rangle = \frac{\sqrt{-1}}{2} \int_M \alpha \wedge \overline{\beta}$ for elements $\alpha, \beta$ in $H$. Let $(\omega_1,\ldots,\omega_h)$ be an orthonormal basis of $H$. By the Riemann-Roch theorem, the $\omega_i$ do not simultaneously vanish at any point of $M$, and we put
\[ \mu = \frac{\sqrt{-1}}{2h} \sum_{i=1}^h \omega_i \wedge \overline{\omega}_i \, . \]
The associated metric has been studied in many contexts, ranging from arithmetic geometry \cite{ar} \cite{ce} \cite{fa} \cite{la} \cite{so} to perturbative string theory \cite{abmnv} \cite{amv} \cite{bm} \cite{bk} \cite{dp}. The metric is known to have everywhere non-positive curvature, and the curvature vanishes at $x \in M$ if and only if $x$ is a Weierstrass point on a hyperelliptic Riemann surface \cite{lew}, Main Theorem.

In this paper we study the canonical Green's function $g_\mu$ associated to $\mu$ (see Section \ref{green} for definitions), and in particular its special values $g_\mu(w_i,w_j)$ at pairs $(w_i,w_j)$ of distinct Weierstrass points on a hyperelliptic Riemann surface. The starting point of our discussion is the case $h=1$, with $M$ given as a $2$-sheeted cover of the Riemann sphere $\cc \pp^1$ branched at four points, say $\alpha_1,\alpha_2,\alpha_3$ and $\infty$. Let $w_1,w_2,w_3,o$ be the four critical points of $M$ lying over $\alpha_1,\alpha_2,\alpha_3,\infty$. It is then known \cite{br} \cite{djell} \cite{sz} that the formula
\begin{equation} \label{ellipticspecial}
g_\mu(w_i,w_j) = \frac{1}{3} \log 2 + \frac{1}{12} \log \left( \frac{ |\alpha_i-\alpha_j|^2}{|\alpha_i-\alpha_k | \cdot |\alpha_j - \alpha_k | } \right)
\end{equation}
holds. By the translation invariance of $g_\mu$ we obtain the formula
\begin{equation} \label{ellipticsum}
\sum_{i=1}^3 g_\mu(w_i,o) = \log 2 \, .
\end{equation}
We will return to both these formulas in the text below.

Our purpose is to generalize formulas (\ref{ellipticspecial}) and (\ref{ellipticsum}) to the case of a hyperelliptic Riemann surface of genus $h \geq 2$. Let $M$ be such a surface, given as the $2$-sheeted cover of $\cc \pp^1$ branched at the $2h+2$ points $\alpha_1,\ldots,\alpha_{2h+2}$ (these may or may not include $\infty$). We put, for each pair $(i,j)$ of distinct indices,
\begin{equation} \label{delta} \delta_{ij} = \frac{ (\alpha_i - \alpha_j )^{2h(2h+1)} \prod_{r \neq s} (\alpha_r - \alpha_s) }{ \prod_{r \neq i} (\alpha_i - \alpha_r)^{2h+1} \prod_{r \neq j} (\alpha_j - \alpha_r)^{2h+1} } \, .
\end{equation}
In this formula, we disregard any difference of two roots when one of the roots is $\infty$. It is readily verified that $\delta_{ij}$ is invariant under the action of $\mathrm{Aut}(\cc \pp^1)=\mathrm{PGL}_2(\cc)$ on the $\alpha_i$. In particular $\delta_{ij}$ defines an analytic modular invariant of the pair $(w_i,w_j)$, where $w_i,w_j$ are the critical points of $M$ lying over $\alpha_i,\alpha_j$. More intrinsically $\delta_{ij}$ is the discriminant of the branch set that one obtains by sending $\alpha_i,\alpha_j$ to $0,\infty$ using an element of $\mathrm{Aut}(\cc \pp^1)$ and by normalizing the other $2h$ roots such that their product is unity \cite{gu}.

We note that there is a tight connection with the more familiar cross ratios on the $\alpha_i$. Put $\eta_{ijk} = \delta_{ik} \delta_{jk}^{-1}$, and let
\[ \mu_{ijkr} = (\alpha_i - \alpha_k)(\alpha_j-\alpha_r)(\alpha_j-\alpha_k)^{-1}(\alpha_i-\alpha_r)^{-1} \]
be the cross ratio of the $4$-tuple $(\alpha_i,\alpha_j,\alpha_k,\alpha_r)$. Then the relation
\[ \mu_{ijkr}^{2h(2h+1)} = \eta_{ijk} \eta_{ijr}^{-1} = \delta_{ik} \delta_{jk}^{-1} \delta_{jr} \delta_{ir}^{-1} \]
holds. It is straightforward to verify that for each index $i$, the product $\prod_{j \neq i} \delta_{ij}$ is equal to unity.

The \emph{energy} of the Weierstrass points on $M$ is defined to be the following invariant of $M$:
\[ \psi(M) = \frac{1}{2h+2} \sum_{i \neq j} g_\mu(w_i,w_j) \, , \]
where the summation runs over all pairs $i,j$ of distinct indices.
Our first result is the following.
\begin{thmA} Let $w_i,w_j$ be two distinct Weierstrass points of $M$. Then the formula
\[ g_\mu(w_i,w_j) =  \frac{1}{4h(2h+1)} \log |\delta_{ij}| + \frac{1}{2h+1} \psi(M) \]
holds.
\end{thmA}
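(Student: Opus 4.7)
The approach starts from the meromorphic function
\[
h_{ij}(P) := \frac{x(P) - \alpha_i}{x(P) - \alpha_j},
\]
viewed on $M$: since $2 w_i$ and $2 w_j$ are both linearly equivalent to the hyperelliptic pencil $g^1_2$, the divisor of $h_{ij}$ is $2(w_i) - 2(w_j)$, and the defining property of the canonical Green's function gives
\[
\log|h_{ij}(P)| = 2\, g_\mu(P, w_i) - 2\, g_\mu(P, w_j) + C_{ij}, \qquad C_{ij} := \int_M \log|h_{ij}|\, \mu.
\]
Using $y$ as a local coordinate at a Weierstrass point $w_r$, the hyperelliptic equation gives $x - \alpha_r = y^2/\Delta_r + O(y^4)$ with $\Delta_r := \prod_{s \neq r}|\alpha_r - \alpha_s|$. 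Setting $g(w_r) := \lim_{P \to w_r}[g_\mu(P, w_r) - \log|y(P)|]$, I would balance the logarithmic singularities on each side of the displayed identity at $P \to w_i$ and at $P \to w_j$ separately. This yields
\begin{align*}
2\, g_\mu(w_i, w_j) - C_{ij} &= \log|\alpha_i - \alpha_j| + \log \Delta_i + 2\, g(w_i),\\
2\, g_\mu(w_i, w_j) + C_{ij} &= \log|\alpha_i - \alpha_j| + \log \Delta_j + 2\, g(w_j),
\end{align*}
whose average eliminates $C_{ij}$.

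Combining the resulting expression with the explicit form of $\delta_{ij}$ and the shorthand $T := \sum_{r \neq s}\log|\alpha_r - \alpha_s|$, a brief algebraic manipulation shows that
\[
g_\mu(w_i, w_j) - \frac{1}{4h(2h+1)}\log|\delta_{ij}| = \phi_i + \phi_j - \frac{T}{4h(2h+1)},
\]
where $\phi_r := \tfrac{h+1}{4h}\log \Delta_r + \tfrac{1}{2}g(w_r)$. Hence the theorem is equivalent to showing that $\phi_r$ is independent of $r$. Granted this uniformity, the common value is pinned down by summing over distinct pairs $(i,j)$ and using $\prod_{j \neq i}\delta_{ij} = 1$ together with the definition of $\psi(M)$, which identifies the constant in the theorem as $\tfrac{1}{2h+1}\psi(M)$.

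The main obstacle is precisely the uniformity of $\phi_r$. Divisor-theoretic considerations alone---for example by introducing the Weierstrass-section function $f_i := y(x-\alpha_i)^h \prod_{r \neq i}(x-\alpha_r)^{-1}$ of divisor $(2h+1)(w_i) - \sum_{r \neq i}(w_r)$, using the identity $f_i/f_j = h_{ij}^{h+1}$ and the value $|f_i(\infty^\pm)| = 1$ when $\infty$ is not a branch point, or the function $x - \alpha_i$ of divisor $2(w_i) - \infty^+ - \infty^-$---produce a family of relations that are consistent with but not independent of the ones already derived, leaving one residual degree of freedom per Weierstrass point. Closing the system requires invoking the analytic nature of $\mu$ explicitly: one must establish
\[
C_{ij} = \int_M \log|h_{ij}|\, \mu = \frac{1}{2h}\log(\Delta_i/\Delta_j),
\]
or equivalently that $\int_M \log|x - \alpha_i|\, \mu - \frac{1}{2h}\log \Delta_i$ is the same for all $i$. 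I would approach this via the hyperelliptic push-forward $\pi_*\mu$ on $\cc \pp^1$---an explicit $(1,1)$-form controlled by the Gram matrix of the basis $x^{k-1}dx/y$ of holomorphic differentials---or alternatively by invoking an Arakelov--Weierstrass-type identity relating $g(w_r)$ directly to the local discriminant $\Delta_r$. This analytic step is the essential content of the proof.
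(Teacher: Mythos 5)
Your setup is sound and your algebra checks out: the divisor identity for $h_{ij}=(x-\alpha_i)/(x-\alpha_j)$, the singularity-matching at $w_i$ and $w_j$, and the resulting reduction of Theorem A to the statement that $\phi_r=\tfrac{h+1}{4h}\log\Delta_r+\tfrac12 g(w_r)$ is independent of $r$ (equivalently $C_{ij}=\tfrac{1}{2h}\log(\Delta_i/\Delta_j)$) are all correct. But you explicitly defer that statement, calling it ``the essential content of the proof,'' and neither of the two routes you sketch is carried out. The first route is genuinely problematic: the push-forward $\pi_*\mu$ to $\cc\pp^1$ is governed by the Gram matrix of the basis $x^{k-1}dx/y$, i.e.\ by transcendental period data, and there is no evident closed-form evaluation of $\int_M\log|x-\alpha_i|\,\mu$ from it; what must be shown is a non-obvious cancellation (that $\int_M\log|x-\alpha_i|\,\mu-\tfrac{1}{2h}\log\Delta_i$ is independent of $i$), not a computation of each term. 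The second route (``an Arakelov--Weierstrass-type identity relating $g(w_r)$ to $\Delta_r$'') is essentially a restatement of what needs to be proved. So the proposal is an accurate reduction, not a proof.

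For comparison, the paper closes exactly this gap differently, and more economically. Instead of expanding at the singular points $w_i,w_j$ (which introduces the Robin constants $g(w_r)$ and hence one unknown per Weierstrass point), it evaluates the identity $g_\mu(w_i,z)-g_\mu(w_j,z)=\tfrac12\log|f_{ij}(z)|-\tfrac12\int_M\log|f_{ij}|\,\mu$ at two \emph{other} Weierstrass points $w_k,w_r$ and takes the second difference, which kills the integral constant $C_{ij}$ outright and leaves only the cross-ratio $\mu_{ijkr}$. The single remaining constant $c_{ij}$ is then forced to vanish by summing over $k\neq i,j$ and invoking Theorem~1.4 of \cite{djhyp}, which asserts that $\sum_{k\neq i}g_\mu(w_i,w_k)$ is independent of $i$, together with $\prod_{k\neq i}\delta_{ik}=1$. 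That cited symmetry is the analytic input your argument is missing; without it (or an actual proof of your claimed formula for $C_{ij}$, which is equivalent to it modulo the relations you already have), the residual degree of freedom you correctly identified remains open.
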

In our second result we give an expression for $\psi(M)$. Consider for the moment an arbitrary compact and connected Riemann surface $M$ of genus $h \geq 1$. Let $\Delta_\mu$ be the Laplacian on $L^2(M,\mu)$ given by putting $\partial \overline{\partial} \, f = \pi \sqrt{-1} \, \Delta_\mu(f) \, \mu$ for $C^\infty$ functions in $L^2(M,\mu)$. Let $(\phi_\ell)_{\ell=0}^\infty$ be an orthonormal basis of real eigenfunctions of $\Delta_\mu$, with corresponding eigenvalues $0=\lambda_0<\lambda_1\leq \lambda_2 \leq \ldots$. We then let $\varphi(M)$ denote the invariant
\[ \varphi(M) = \sum_{\ell>0} \frac{2}{\lambda_\ell} \sum_{m,n=1}^h \left| \int_M \phi_\ell \, \omega_m \wedge \bar{\omega}_n \right|^2  \]
of $(M,\mu)$. This fundamental invariant was introduced and studied recently by N.~Kawazumi \cite{kaw} and S. Zhang \cite{zh}. Note that $\varphi(M) \geq 0$, and that $\varphi(M)=0$ if and only if $M$ has genus one. In \cite{djasympt} the asymptotic behavior of the $\varphi$-invariant is determined for surfaces degenerating into a stable curve with a single node. The result shows in particular that $\varphi$ can be viewed as a Weil function, with respect to the boundary, on the stable (Deligne-Mumford) compactification of the moduli space of surfaces of a fixed genus $h \geq 2$.

\begin{thmB} Let $M$ be a hyperelliptic Riemann surface of genus $h \geq 2$, and let $\varphi(M)$ be the Kawazumi-Zhang invariant of $M$. Then for the energy $\psi(M)$ of the set of Weierstrass points of $M$, the formula
\[ \psi(M) = \frac{1}{2h} \, \varphi(M) + \log 2 \]
holds.
\end{thmB}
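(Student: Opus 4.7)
The strategy is to derive a spectral identity expressing $\varphi(M)$ as an integral of $g_\mu$ against a natural kernel, and then use the hyperelliptic involution $\sigma$ to turn this analytic pairing into a combinatorial sum over Weierstrass points. Starting from the spectral expansion $g_\mu(P,Q)=\sum_{\ell\ge 1}\lambda_\ell^{-1}\phi_\ell(P)\phi_\ell(Q)$ and applying Parseval's identity to the definition of $\varphi(M)$, one obtains the integral representation
\[
\varphi(M) \;=\; 2\int_{M\times M} g_\mu(P,Q)\, B(P,Q),
\]
where $B(P,Q)=\sum_{m,n}(\omega_m\wedge \bar{\omega}_n)(P)\cdot(\bar{\omega}_m\wedge\omega_n)(Q)$ is the Bergman-type kernel built from the orthonormal basis of holomorphic differentials. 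This converts the spectral definition into an analytic pairing against $g_\mu$.

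I would next exploit the hyperelliptic involution $\sigma$. Since $\sigma^*\omega=-\omega$ for every $\omega\in H^0(M,\Omega^1)$, the kernel $B$ is $\sigma\times\sigma$-invariant, so only the symmetric component of $g_\mu$ contributes to the integral. The Weierstrass points are precisely the fixed locus of $\sigma$, and hence are the natural carriers of this symmetric information. The heart of the argument would then be to prove a ``concentration'' identity: modulo smooth terms of the form $\mu(P)\otimes\alpha(Q)+\beta(P)\otimes\mu(Q)$ (which pair trivially with $g_\mu$, since $g_\mu$ has mean zero in each variable), the $\sigma\times\sigma$-symmetrization of $B$ reduces to a discrete measure $\gamma\sum_{i,j}\delta_{w_i}(P)\delta_{w_j}(Q)$ supported on Weierstrass pairs, plus a universal diagonal contribution. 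The constant should come out to $\gamma=h/(2h+2)$, so that substitution gives $\varphi(M)=\tfrac{2h}{2h+2}\sum_{i\ne j}g_\mu(w_i,w_j)+(\text{diagonal})=2h\,\psi(M)+(\text{diagonal})$; the diagonal contribution must then evaluate to $-2h\log 2$ to match Theorem~B.

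The main obstacle is establishing the concentration identity and pinning down the diagonal constant $\log 2$. For genus one the kernel $B=4\mu\otimes\mu$ is already ``admissible'' and the statement reduces to the known $\varphi(M)=0$, but for $h\ge 2$ one must use the hyperelliptic structure actively. A natural route is through the explicit basis $\omega_i=x^{i-1}\,dx/y$ on the affine model $y^2=\prod_r(x-\alpha_r)$: one analyzes how the $(1,1)$-forms $(\omega_m\wedge\bar{\omega}_n)(P)\cdot(\bar{\omega}_m\wedge\omega_n)(Q)$ behave under $\pi\colon M\to\cc\pp^1$ and why they concentrate on the branch divisor after $\sigma$-averaging. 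The constant $\log 2$ is the analytic signature of the ramification of $\pi$ at Weierstrass points: since a local uniformizer $t_{w_i}$ on $M$ satisfies $t_{w_i}^2=x-\alpha_i$ up to a unit, the Arakelov-style regularization of $g_\mu$ at a Weierstrass point acquires a factor of $2$ compared to the naive $\cc\pp^1$ computation, producing $\log 2$. This is precisely the mechanism underlying the genus-one identity $\sum_{i=1}^3 g_\mu(w_i,o)=\log 2$ recalled in the introduction.
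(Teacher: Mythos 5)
Your opening step is fine: Parseval applied to the spectral definition does give $\varphi(M)=2\int_{M\times M}g_\mu(P,Q)B(P,Q)$ with $B$ the Bergman-type kernel (this is essentially Zhang's formulation of the invariant). But the entire proof then rests on the ``concentration identity,'' and this is a genuine gap: it is neither proved nor even precisely formulated, and as literally stated it is false. The kernel $B$ is a \emph{smooth} $(1,1)\otimes(1,1)$-form on $M\times M$; it is already $\sigma\times\sigma$-invariant (since $\sigma^*\omega=-\omega$ implies $\sigma^*(\omega_m\wedge\bar\omega_n)=\omega_m\wedge\bar\omega_n$), so symmetrization does nothing, and a smooth form cannot equal a sum of Dirac masses at Weierstrass pairs modulo further \emph{smooth} terms of the form $\mu(P)\otimes\alpha(Q)+\beta(P)\otimes\mu(Q)$. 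The only way to relate a smooth kernel to point masses is cohomologically, i.e.\ modulo $\partial\overline{\partial}$ of a function with logarithmic singularities along the Weierstrass divisor; but then the pairing with $g_\mu$ is \emph{not} unchanged --- it picks up explicit correction terms (via the inversion formula $f(y)=-\int g_\mu(y,x)\Delta_\mu f\,\mu+\int f\mu$), and computing those corrections is where all the analytic content of the theorem lives. You have not identified the potential, and the constants you quote ($\gamma=h/(2h+2)$, diagonal contribution $-2h\log 2$) are reverse-engineered from the desired conclusion rather than derived; the heuristic that ramification of $t_{w_i}^2=x-\alpha_i$ ``produces $\log 2$'' is not a computation.

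For comparison, the paper avoids any such pointwise identity on a fixed surface. It works over the moduli space $\hh$ of hyperelliptic surfaces: first it shows $\partial\overline{\partial}\psi=\frac{1}{2h}\partial\overline{\partial}\varphi$ on $\hh$ (combining Kawazumi's formula for $\partial\overline{\partial}\varphi$ with a ``parallel transport'' property of the Arakelov forms $\nu$ and $e^A$ along Weierstrass sections), so that $\chi=\psi-\frac{1}{2h}\varphi$ is pluriharmonic; then it computes the boundary behaviour of $\psi$ and $\varphi$ under degeneration to a two-component stable curve using Wentworth's asymptotics of the Green's function, and concludes by induction on the genus (base case the elliptic identity $\sum g_\mu(w_i,o)=\log 2$) together with a maximum-principle-type argument on complete curves in the Satake compactification. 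If you want to pursue your direct route, the missing ingredient is a precise identity of currents with an explicit $\partial\overline{\partial}$-potential and a computation of the resulting correction terms; as it stands the argument does not go through.
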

In particular we find that $\psi(M) > \log 2$. With $h=1$, the formulas in Theorems A and B specialize to equations (\ref{ellipticspecial}) and (\ref{ellipticsum}), respectively.

In order to prove Theorem B we proceed as follows. First we prove that, when viewed as functions on the moduli space of hyperelliptic Riemann surfaces, both $\psi$ and $\frac{1}{2h}\varphi$ have the same image under $\deldelbar$. Then we study the asymptotic behavior of the difference of $\psi$ and $\frac{1}{2h}\varphi$ on a family of hyperelliptic Riemann surfaces degenerating into a stable curve of compact type. By using induction on $h$ we will then deduce that the difference is a constant on the moduli space, equal to $\log 2$. For the degeneration argument we will rely heavily on results obtained by R. Wentworth \cite{we}.

\section{Canonical Green's functions} \label{green}

In this section we collect some basic results on canonical Green's functions. The main sources are \cite{ar} \cite{fa}. Proposition \ref{flat}, about a `parallel' property of certain $(1,1)$-forms related to the universal Riemann surface over the moduli space of Riemann surfaces, seems not to be well known, and is possibly of independent interest.

As before, let $M$ be a compact and connected Riemann surface of genus $h \geq 1$ and $\mu$ its canonical K\"ahler form. Let $\Delta$ be the diagonal on the complex manifold $M \times M$. The canonical Green's function $g_\mu=g_\mu(x,y)$ is the $C^\infty$-function on $M \times M \setminus \Delta$ uniquely characterized by the following conditions:
\begin{enumerate}
\item[(1)] $\partial \overline{\partial} g_\mu(x,y) =\pi \sqrt{-1}(\mu(y) - \delta_x)$ for all $x \in M$;
\item[(2)] $\int_M g_\mu(x,y) \mu(y) = 0$ for all $x \in M$;
\item[(3)] $g_\mu(x,y)=g_\mu(y,x)$ for all $x \neq y \in M$;
\item[(4)] $g_\mu(x,y) - \log|z(x)-z(y)|$ is bounded for all $x \neq y$ in a coordinate chart $(U,z \colon U \isom D)$ of $M$, where $D$ is the open unit disk.
\end{enumerate}
The canonical Green's function inverts the Laplacian $\Delta_\mu$ in the sense that
\[ f(x) = -\int_M g_\mu(x,y) \, \Delta_\mu(f)(y) \, \mu(y) + \int_M f \, \mu \]
holds for all $x \in M$ and all $f$ in $C^\infty(M)$. As a consequence we have a formal development
\[ g_\mu(x,y) = \sum_{\ell>0} \frac{ \phi_\ell(x) \phi_\ell(y)}{\lambda_\ell} \]
for $g_\mu$, where $\phi_\ell$ and $\lambda_\ell$ are the eigenfunctions and eigenvalues of $\Delta_\mu$ that we have introduced above.

In \cite{fa} Section 7 using standard harmonic analysis explicit formulas are derived for the $\phi_\ell$ and $\lambda_\ell$ in the case $h=1$. As a corollary of these formulas \cite{fa} Section 7 contains a derivation of equation (\ref{ellipticsum}) from the resulting formal expression for $g_\mu$. We would like to present here an alternative argument.
\begin{prop} \label{elliptic}
Assume $(M,o)$ is a complex torus, and let $N$ be a positive integer. Let $x_1,\ldots,x_{N^2-1}$ be the non-trivial $N$-torsion points of $M$. Then the formula
\[ \sum_{i=1}^{N^2-1} g_\mu(x_i,o) = \log N \]
holds.
\end{prop}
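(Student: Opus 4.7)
The plan is to exploit the multiplication-by-$N$ isogeny $[N]\colon M \to M$, which is \'etale of degree $N^2$ and whose fiber over $o$ is precisely $\{o, x_1, \ldots, x_{N^2-1}\}$. By translation invariance of $\mu$ together with $\deg[N] = N^2$, one has $[N]^*\mu = N^2 \mu$ (any translation-invariant $(1,1)$-form is a constant multiple of $\mu$, and the constant is pinned down by total mass) and $[N]^*\delta_o = \sum_{i=0}^{N^2-1} \delta_{x_i}$ (writing $x_0 := o$). Pulling condition (1) of Section \ref{green} back through $[N]$, and separately summing (1) over the $x_i$, one finds that the two functions
\[
F(x) := g_\mu(Nx, o) \qquad \text{and} \qquad G(x) := \sum_{i=0}^{N^2-1} g_\mu(x, x_i)
\]
have the same $\partial\overline{\partial}$ on $M$ as $(1,1)$-currents, namely $\pi\sqrt{-1}\bigl(N^2\mu - \sum_i \delta_{x_i}\bigr)$.

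The next step is to show that $F - G$ is a constant. Near $x_i$, condition (4) gives that each $g_\mu(x, x_i)$ has leading singularity $\log|x - x_i|$, while $g_\mu(Nx, o) = \log|N(x - x_i)| + O(1) = \log N + \log|x - x_i| + O(1)$ since $N x_i \equiv o$. The log singularities cancel, so $F - G$ is bounded, hence extends to a harmonic function on $M$ with vanishing $\partial\overline{\partial}$, i.e., a constant $C$. To determine $C$ I would integrate against $\mu$: condition (2) combined with symmetry gives $\int_M G\, \mu = 0$, and the change-of-variables identity $\int_M ([N]^*f)\, \mu = \int_M f\, \mu$ (which follows from $[N]^*\mu = N^2 \mu$ together with $\deg[N] = N^2$) reduces $\int_M F\, \mu$ to $\int_M g_\mu(y, o)\, \mu(y) = 0$. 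Hence $C = 0$.

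Finally, to extract the formula, compare the asymptotic expansions of the identity $F = G$ near $x = o$. Writing $g_\mu(z, o) = \log|z| + h(z)$ in a local coordinate at $o$ with $h$ continuous, the identity reads
\[
\log N + \log|z| + h(Nz) = \log|z| + h(z) + \sum_{i=1}^{N^2-1} g_\mu(z, x_i),
\]
and letting $z \to 0$ the $\log|z|$ terms cancel while $h(Nz), h(z) \to h(0)$, leaving $\log N = \sum_{i=1}^{N^2-1} g_\mu(o, x_i)$, as desired. The only delicate steps are matching the logarithmic singularities (which is the point at which the factor $\log N$ enters) and justifying the change of variables for the $F$-integral; neither poses a real obstacle.
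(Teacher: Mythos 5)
Your argument is correct and is essentially the paper's own proof: the identity $F=G$ you establish is precisely the distribution relation $g_\mu(Nx,o)=\sum_{w\colon Nw=o}g_\mu(x,w)$ that the paper asserts from properties (1) and (2), and the extraction of $\log N$ by matching logarithmic singularities as $x\to o$ is identical. You merely supply more detail on the first step (the $\deldelbar$ computation, cancellation of singularities, and normalization via integration against $\mu$), all of which is sound.
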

\begin{proof} From properties (1) and (2) it follows that for every $x,y$ on $M$ with $Nx \neq y$ the equality
\[ g_\mu(Nx,y) = \sum_{w \colon Nw =y} g_\mu(x,w) \]
holds. Now choose $x$ in a standard euclidean coordinate chart $z \colon U \isom D$ around ~$o$, where $D$ is the open unit disk. Then we have
\begin{align*}
g_\mu(Nx,o)  & =  \sum_{w \colon Nw=o} g_\mu(x,w) \\
             & =  \sum_{i=1}^{N^2-1} g_\mu(x_i,x) + g_\mu(x,o) \\
             & =  \sum_{i=1}^{N^2-1} g_\mu(x_i,x) + \log |z(x)| + a + o(1)
\end{align*}
as $x \to o$, where $a$ is some constant, by properties (3) and (4). On the other hand we have
\begin{align*}
g_\mu(Nx,o)  & =  \log |N z(x)| + a + o(1) \\
             & =  \log N + \log |z(x)| + a + o(1)
\end{align*}
as $x \to o$, by property (4). The desired equality follows.
\end{proof}
One obtains (\ref{ellipticsum}) by taking $N=2$ in the above proposition.

Let $\mm$ denote the moduli space of compact and connected Riemann surfaces of genus $h \geq2$, and denote by $\pi \colon \CC \to \mm$ the universal surface over $\mm$. Both are viewed as orbifolds. The canonical Green's functions on the fibers of $\pi$ determine a generalized function $g$ on the fiber product $\CC \times_\mm \CC$ with logarithmic singularities along the diagonal $\Delta$. Let $\nu$ be the $(1,1)$-form on $\CC \times_\mm \CC$ determined by the condition $\partial \overline{\partial} g = \pi \sqrt{-1}(\nu - \delta_\Delta)$. Let $e^A$ denote the restriction of $\nu$ to $\Delta$, which we view as another copy of $\CC$. By \cite{ar} the form $e^A$ restricts to $2-2h$ times the canonical $(1,1)$-form $\mu$ in each fiber of $\pi \colon \CC \to \mm$.

The $(1,1)$-forms $e^A$ and $\nu$ are parallel over $\mm$ in the following sense.
\begin{prop} \label{flat} Let $s,t$ be arbitrary holomorphic sections of $\pi$ over an open subset $U$ of $\mm$. Then the equalities $s^*e^A =t^*e^A=(s,t)^*\nu$ hold on $U$.
\end{prop}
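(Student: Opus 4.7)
By property~(3), $g_\mu(x,y) = g_\mu(y,x)$, so $\nu$ is invariant under the swap $(x,y) \mapsto (y,x)$, giving $(s,t)^*\nu = (t,s)^*\nu$. Once the identity $(s,t)^*\nu = s^*e^A$ is proved, swapping the roles of $s$ and $t$ yields $t^*e^A = (s,t)^*\nu$, and the three quantities coincide. After shrinking $U$, I may assume either $s \equiv t$ on $U$ or $s(u) \neq t(u)$ for every $u \in U$. In the first case $(s,t)\colon U \to \CC \times_\mm \CC$ factors through the diagonal $\Delta$, and the equality $(s,t)^*\nu = s^*e^A$ is immediate since $e^A$ is by definition the restriction of $\nu$ to $\Delta$; the content of the proposition is the disjoint case, treated below by two complementary routes.

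\emph{A concrete local reduction.} Near a point $u_0 \in U$, choose holomorphic coordinates $(u,z)$ on $\CC$ with $u$ parametrising $\mm$, $z$ a fibre coordinate, and $z \circ s \equiv 0$; set $\zeta(u) := z(t(u))$, a nowhere-zero holomorphic function on $U$. By property~(4), the function
\[ h(u,z_1,z_2) \ :=\ g_\mu(x,y) - \log|z_1 - z_2| \]
extends smoothly across the diagonal $z_1 = z_2$. Substituting this decomposition into $\partial\overline{\partial}g = \pi\sqrt{-1}(\nu - \delta_\Delta)$ and using the Poincar\'e--Lelong identity for $\log|z_1 - z_2|$ gives, in this chart,
\[ \nu \ =\ \frac{1}{\pi\sqrt{-1}}\,\partial\overline{\partial} h, \qquad e^A \ =\ \frac{1}{\pi\sqrt{-1}}\,\partial\overline{\partial} \tilde h, \qquad \tilde h(u,z):=h(u,z,z). \]
Since $\log|\zeta(u)|$ is pluriharmonic, pullback yields
\[ (s,t)^*\nu = \frac{1}{\pi\sqrt{-1}}\,\partial\overline{\partial}\bigl[h(u,0,\zeta(u))\bigr], \qquad s^*e^A = \frac{1}{\pi\sqrt{-1}}\,\partial\overline{\partial}\bigl[h(u,0,0)\bigr]. \]
Hence the proposition is equivalent to the pluriharmonicity on $U$ of $h(u,0,\zeta(u)) - h(u,0,0)$.

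\emph{A global interpretation via the Deligne pairing.} The Arakelov-metrized pullback $(s,t)^*\mathcal{O}(\Delta)$ is canonically isomorphic, as a metrized line bundle on $U$, to the Deligne pairing $\langle \mathcal{O}(s),\mathcal{O}(t)\rangle$ with its Arakelov--Deligne metric, and its first Chern form equals $(s,t)^*\nu$; while the adjunction $\mathcal{O}(\Delta)|_\Delta \cong T_{\CC/\mm}$ identifies the self-pairing $\langle \mathcal{O}(s), \mathcal{O}(s)\rangle$ isometrically with $s^*T_{\CC/\mm}$ equipped with the residual Arakelov metric, whose Chern form is $s^*e^A$. The curvature formula for the Deligne pairing writes both of these as fibre integrals over $\CC|_U / U$ of wedge products of Arakelov Chern forms of $\mathcal{O}(s)$ and $\mathcal{O}(t)$, and by multilinearity the equality $(s,t)^*\nu = s^*e^A$ translates to the vanishing of the Chern form of $\langle \mathcal{O}(s), \mathcal{O}(t) \otimes \mathcal{O}(s)^{-1}\rangle$, where the second factor is fibrewise of degree zero. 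The main obstacle is this vanishing: the fibrewise Chern form of the Arakelov metric on any fibrewise degree-zero line bundle is zero, a direct consequence of the normalisation property~(2), but one needs a careful bidegree analysis along the fibration $\CC|_U \to U$ to rule out a non-zero contribution from the horizontal components of the two paired forms; this last step rests on the specific structure of the Arakelov metric and ultimately on the defining properties of $g_\mu$.
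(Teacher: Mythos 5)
There is a genuine gap. Both of your reductions are reformulations rather than proofs, and you say so yourself at the end: the ``careful bidegree analysis along the fibration'' needed to rule out a horizontal contribution is exactly the content of the proposition, and you do not carry it out. In the local picture, you correctly reduce to showing that $h(u,0,\zeta(u)) - h(u,0,0)$ is pluriharmonic, but nothing in the fibrewise defining properties (1)--(4) of $g_\mu$ gives you control over how $h$ varies in the moduli direction $u$; that is precisely the global input you are missing. In the Deligne-pairing picture the same issue reappears: the factor $\mathcal{O}(t)\otimes\mathcal{O}(s)^{-1}$ has vanishing \emph{fibrewise} curvature, but the fibre integral $\int_{\CC/U} c_1(\mathcal{O}(s))\wedge c_1(\mathcal{O}(t)\otimes\mathcal{O}(s)^{-1})$ picks up the cross terms of mixed fibre--horizontal bidegree and the term pairing the fibrewise part of one form against the horizontal part of the other, and there is no a priori reason for these to cancel. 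Asserting that the cancellation ``rests on the specific structure of the Arakelov metric'' is not an argument.

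The idea the paper uses to close exactly this gap is a passage to the universal Jacobian: after replacing $\mm$ by a finite cover carrying a universal theta characteristic, one maps $\CC\times_\mm\CC$ to $\jj$ by $(M,x,y)\mapsto [h\cdot x - y - \alpha]$ and invokes \cite{dj}, Lemma~3.2, which exhibits $h\,\nu$ (up to a $(1,1)$-form pulled back from the first factor) as the pullback of a $(1,1)$-form $w$ on $\jj$ that is translation-invariant in the fibres of $\jj\to\mm$. Since the sections $\Phi\bar{s}$ and $\Phi\bar{t}$ differ by the fibrewise translation by $[s-t]$, translation invariance of $w$ gives $\bar{s}^*\nu=\bar{t}^*\nu$ at once, and the proposition follows by composing with $s$ and using the symmetry of $g$. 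This is the global mechanism that forces the horizontal components to behave; some input of this kind (or an equivalent one) is indispensable, and your proposal does not supply it.
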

\begin{proof} Without loss of generality we may replace $\mm$ by a finite cover so that we may assume that $\mm$ is equipped with a universal theta characteristic $\alpha$, i.e. a consistent choice of a divisor class $\alpha$ of degree $h-1$ on each surface $M$ such that $2 \alpha$ is the canonical class. Let $\jj \to \mm$ be the universal jacobian over $\mm$ and let $\Phi \colon \CC \times_\mm \CC \to \jj$ be the map over $\mm$ given by sending a triple $(M,x,y)$ to the pair consisting of the jacobian $J(M)$ of $M$ and the class of the divisor $h \cdot x -y - \alpha$ in $J(M)$. Let $\bar{\pi} \colon \CC \times_\mm \CC \to \CC$ be the projection on the first factor and let $\bar{s},\bar{t} \colon \pi^{-1}U \to \CC \times_\mm \CC$ be the local sections of $\bar{\pi}$ obtained by pulling back the given local sections $s,t$ of $\pi$. It follows from \cite{dj} Lemma~3.2 that there exists a $(1,1)$-form $\kappa$ on $\CC$ and a $(1,1)$-form $w$ on $\jj$ which is translation-invariant in the fibers of $\jj \to \mm$, such that $\Phi^*w = h \, \nu + \bar{\pi}^* \kappa$ holds. We deduce from this that
\[ h \, \bar{s}^* \nu = (\Phi \bar{s})^*w - \kappa \quad \textrm{and} \quad
h \, \bar{t}^* \nu = (\Phi \bar{t})^*w - \kappa \, . \]
Let $T_{x,y}$ denote translation by $[x-y]$ over $\jj|_U$. Then $\Phi\bar{t}=T_{s,t}\Phi \bar{s}$ and since $w$ is fiberwise translation-invariant we obtain $\bar{s}^* \nu = \bar{t}^* \nu$ from the above equalities. We derive $(s,t)^*\nu = s^* \bar{t}^* \nu = s^* \bar{s}^* \nu = (s,s)^*\nu = s^*e^A$. By symmetry property (3) we have $(s,t)^*\nu = (t,s)^*\nu = t^* e^A$ as well.
\end{proof}

\section{Proof of Theorem A}

In this section we give a proof of Theorem A. Let $x \colon M \to \cc \pp^1$ be the $2$-sheeted cover with branch points $\alpha_1,\ldots,\alpha_{2h+2}$. Let $w_1,\ldots,w_{2h+2}$ be the corresponding Weierstrass points on $M$. For the moment we fix two distinct indices $i,j$. Consider then the meromorphic function $f_{ij}=(x-\alpha_i)(x-\alpha_j)^{-1}$ on $M$. We note that $\mathrm{div}(f_{ij}) = 2(w_i-w_j)$. From properties (1) and (2) we therefore obtain an equality
\[ g_\mu(w_i,z)-g_\mu(w_j,z)= \frac{1}{2} \log |f_{ij}(z)| - \frac{1}{2} \int_M \log |f_{ij}| \cdot \mu \]
of generalized functions on $M$. In particular we find for any pair of indices $k,r$
\begin{align*}
g_\mu(w_i,w_k)-g_\mu(w_i,w_r)-&g_\mu(w_j,w_k)+g_\mu(w_j,w_r) \\
= & \frac{1}{2} \log | f_{ij}(w_k)f_{ij}(w_r)^{-1}| \\
= & \frac{1}{2} \log |\mu_{ijkr}| \\
= & \frac{1}{4h(2h+1)} \log |\eta_{ijk} \eta_{ijr}^{-1} |
\end{align*}
where we recall that $\eta_{ijk}=\delta_{ik}\delta_{jk}^{-1}$ with $\delta_{ij}$ the expression from (\ref{delta}). This equality implies that there exists a constant $c_{ij}$ such that
\[ g_\mu(w_i,w_k) - g_\mu(w_j,w_k) = \frac{1}{4h(2h+1)} \log |\eta_{ijk}| + c_{ij}  \]
for each index $k$. It follows from \cite{djhyp}, Theorem~1.4 that
\[ \sum_{k \neq i} g_\mu(w_i,w_k) = \sum_{k \neq j} g_\mu(w_j,w_k) \, . \]
From the fact that $\prod_{k \neq i,j} \delta_{ij}$ equals unity we obtain that $\sum_{k \neq i,j} \log |\eta_{ijk}| = 0$. It follows that
\[ 2h \, c_{ij} =\sum_{k \neq i,j} (g_\mu(w_i,w_k)-g_\mu(w_j,w_k)) = 0 \, , \]
hence $c_{ij}=0$ and
\[ g_\mu(w_i,w_k)-g_\mu(w_j,w_k) = \frac{1}{4h(2h+1)} \log|\eta_{ijk}| = \frac{1}{4h(2h+1)} \log |\delta_{ik} \delta_{jk}^{-1}| \, . \]
Now fix the index $k$ and vary the indices $i,j$. We find that there exists a constant $c_k$ such that
\[ g_\mu(w_i,w_k) = \frac{1}{4h(2h+1)} \log |\delta_{ik}| + c_k  \]
for all indices $i \neq k$. We obtain $c_k = \frac{1}{2h+1}\psi(M)$ by summing over $i \neq k$. The formula in Theorem A follows.

Note that a similar argument in the case $h=1$ allows one to deduce (\ref{ellipticspecial}) from (\ref{ellipticsum}).

\section{Proof of Theorem B}

As was announced in the Introduction, we proceed in several steps. Let $\hh$ be the moduli space of hyperelliptic Riemann surfaces of a fixed genus $h \geq 2$. Let $\pi \colon \CC \to \hh$ be the universal surface over $\hh$. Both are viewed as orbifolds. A first result is the following.
\begin{prop} \label{pluriharmonic} The equality of $(1,1)$-forms
\[ \partial \overline{\partial} \, \psi = \frac{1}{2h} \partial \overline{\partial} \, \varphi \]
holds on $\hh$.
\end{prop}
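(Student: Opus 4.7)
The plan is to compute $\deldelbar \psi$ and $\frac{1}{2h}\deldelbar \varphi$ separately as $(1,1)$-forms on $\hh$ and then match them. For $\psi$, the calculation is a direct application of Proposition~\ref{flat}. I would pass to a finite cover of $\hh$ on which the Weierstrass points furnish holomorphic sections $s_1,\ldots,s_{2h+2}\colon \hh \to \CC$ of $\pi$. Since for any $i \neq j$ the map $(s_i,s_j)\colon \hh \to \CC \times_\hh \CC$ avoids the diagonal $\Delta$, combining $\deldelbar g = \pi\sqrt{-1}(\nu - \delta_\Delta)$ with Proposition~\ref{flat} yields
\[ \deldelbar\, g_\mu(w_i,w_j) \,=\, \pi\sqrt{-1}\,(s_i,s_j)^*\nu \,=\, \pi\sqrt{-1}\,\omega_{Ar}, \]
where $\omega_{Ar} := s^*e^A$ is the section-independent $(1,1)$-form on $\hh$ furnished by that proposition. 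Summing over the $(2h+2)(2h+1)$ ordered pairs and dividing by $2h+2$ then gives
\[ \deldelbar \psi \,=\, (2h+1)\,\pi\sqrt{-1}\,\omega_{Ar} \]
on the cover, and hence by descent on $\hh$ itself.

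The remaining task is to show that $\deldelbar \varphi = 2h(2h+1)\,\pi\sqrt{-1}\,\omega_{Ar}$ on $\hh$. For this I would invoke a moduli-theoretic formula for $\deldelbar\varphi$. One route is through the Kawazumi--Zhang spectral expression, which after using the eigenfunction expansion $g_\mu(x,y)=\sum_\ell \phi_\ell(x)\phi_\ell(y)/\lambda_\ell$ can be rewritten as the off-diagonal double integral $-2\int_{M\times M} g_\mu \cdot B \wedge \bar B$, with $B$ the Bergman kernel on $M\times M$. Pushing $\deldelbar$ through this fibre integral produces a combination of tautological $(1,1)$-forms on $\mm$, such as $\pi_*(e^A \wedge e^A)$ and the Hodge class. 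Restriction to $\hh$, the identity $\pi_*(e^A) = 2-2h$, and a further application of Proposition~\ref{flat} should collapse that combination to a scalar multiple of $\omega_{Ar}$. An alternative is to appeal directly to the computation of the first Chern form of the Kawazumi--Zhang metric carried out in \cite{kaw} and \cite{djasympt}.

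The main obstacle is pinning down the precise scalar $2h(2h+1)$. Whereas Proposition~\ref{flat} dispatches $\deldelbar\psi$ in a single step, the variation of $\varphi$ mixes the distributional $\delta_\Delta$-contribution that appears when $\deldelbar$ is pushed through the defining integral of $\varphi$ with the moduli variation of the Bergman kernel $B$; one must verify that all the auxiliary tautological pieces either cancel or reduce to $\omega_{Ar}$ on the hyperelliptic locus, exploiting, e.g., the Mumford-type relations available among tautological classes when restricted to $\hh$. It is this bookkeeping, rather than the formal passage via Proposition~\ref{flat}, that constitutes the real content of the proof.
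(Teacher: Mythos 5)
Your treatment of $\deldelbar \psi$ is exactly the paper's: pass to a cover of $\hh$ on which the Weierstrass points give holomorphic sections, apply Proposition~\ref{flat} to each ordered pair, and conclude $\deldelbar \psi = (2h+1)\pi\sqrt{-1}\, w_i^* e^A$. That half is complete and correct.

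The $\varphi$ half, however, has a genuine gap, and you essentially concede it when you write that pinning down the scalar $2h(2h+1)$ ``constitutes the real content of the proof.'' Your proposed route --- rewriting $\varphi$ as a double integral of $g_\mu$ against the Bergman kernel, pushing $\deldelbar$ through the fibre integral, and hoping that the resulting tautological forms such as $\pi_*(e^A \wedge e^A)$ collapse on $\hh$ to a multiple of $w_i^* e^A$ --- leaves unexplained why a fibre integral over the whole curve should equal a constant times the evaluation of $e^A$ at a single Weierstrass point. That is not a formal consequence of Mumford-type relations among tautological classes restricted to the hyperelliptic locus; it is precisely the nontrivial geometric input. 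The paper supplies it by citing Kawazumi's Theorem~3.1 of \cite{kaw}, which expresses $\deldelbar \varphi$ over the full moduli space in terms of the pointed harmonic volume, together with the fact that the harmonic volume of a Weierstrass point on a hyperelliptic Riemann surface is trivial; this vanishing is what makes Kawazumi's formula degenerate to $\deldelbar \varphi = 2h(2h+1)\pi\sqrt{-1}\, w_i^* e^A$ on $\hh$. Your parenthetical alternative (``appeal directly to the computation of the first Chern form \dots in \cite{kaw}'') is in fact the paper's argument, but without invoking the vanishing of the harmonic volume at hyperelliptic Weierstrass points you cannot pass from Kawazumi's general formula on $\mm$ to the specific multiple of $w_i^* e^A$ on $\hh$. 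The missing idea is therefore concrete: the triviality of the harmonic volume at Weierstrass points of hyperelliptic surfaces.
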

\begin{proof} Let $\uu$ be an open cover of $\hh$ such that for each $U$ in $\uu$ there exist $2h+2$ holomorphic sections $w_1,\ldots,w_{2h+2} \colon U \to \pi^{-1}U$ of $\pi^{-1}U \to U$ such that each $w_i$ is a Weierstrass point in each fiber of $\pi^{-1}U \to U$. Let $U$ be an element of $\uu$. It suffices to prove the required equality on $U$. Theorem 3.1 of \cite{kaw} gives an expression for $\partial \overline{\partial} \, \varphi$ over the moduli space of Riemann surfaces of genus $h$ (note that \cite{kaw} considers the invariant $a$ with $a=\frac{1}{2\pi}\varphi$). Since the `harmonic volume' of a Weierstrass point on a hyperelliptic Riemann surface is trivial, this expression immediately implies that on $U$ one has the equality
\[ \partial \overline{\partial} \, \varphi = 2h(2h+1)\pi \sqrt{-1} \, w_i^* e^A \, , \]
for any choice of the index $i$. On the other hand by Proposition \ref{flat} we have
\[ \partial \overline{\partial} \, \psi = \pi \sqrt{-1} \sum_{j \neq i} (w_i,w_j)^* \nu = (2h+1)\pi \sqrt{-1} \, w_i^* e^A  \]
for each index $i$. The proposition follows.
\end{proof}
We next determine the limit behavior of the difference $\psi - \frac{1}{2h}\varphi$ in a holomorphic family $M_t$ of hyperelliptic Riemann surfaces of genus $h$ degenerating into a stable curve $M_0$ which is the union of two hyperelliptic Riemann surfaces $M_1,M_2$ of genera $h_1,h_2 \geq 1$, respectively, with two points $p_1 \in M_1$ and $p_2 \in M_2$ identified, as in Chapter III of the book `Theta functions on Riemann surfaces' by J. Fay \cite{fay}. Here $t$ runs through a small punctured open disk around $0$ in the complex plane. Note that $h=h_1+h_2$. Let $g_1,g_2$ be the canonical Green's functions on $M_1,M_2$, respectively, and let $z_1,z_2$ be the local coordinates around $p_1,p_2$ on $M_1,M_2$ that come with the degeneration model. We put, following \cite{we}, Section~6:
\[ \log k_1 = \lim_{x \to p_1} \left[ g_1(x,p_1) - \log|z_1(x)| \right] \, , \quad
\log k_2 = \lim_{x \to p_2} \left[ g_2(x,p_2) - \log|z_2(x)| \right] \, , \]
and then make the reparametrization $\tau=k_1k_2t$.
\begin{thm} \label{finalphi} For the surfaces $M_t$ in Fay's degeneration model, the limit formula
\[ \lim_{t\to 0} \left[ \frac{1}{2h} \, \varphi(M_t) + \frac{h_1h_2}{h^2} \log |\tau| \right] = \frac{1}{2h} \, \varphi(M_1) + \frac{1}{2h} \, \varphi(M_2) \]
holds.
\end{thm}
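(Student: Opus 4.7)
The plan is to combine an integral representation of $\varphi$ with Wentworth's asymptotic analysis \cite{we} of the canonical Green's function in Fay's degeneration model.

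First, I would recall an identity, due to Zhang \cite{zh}, that expresses $\varphi(M)$ as an integral over $M \times M$ of the canonical Green's function $g_\mu$ paired against an explicit $(2,2)$-form constructed from $\mu$ and the Arakelov form $e^A$. This recasts the invariant as a functional of the pair $(g_{\mu_t}, \mu_t)$, so that the asymptotics of $\varphi(M_t)$ reduce to the asymptotics of such integrals.

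Second, I would apply the limit formulas from \cite{we}. Away from the node the canonical form $\mu_t$ converges on each component $M_i$ to $(h_i/h)\mu_i$, since the $h$-dimensional space of holomorphic differentials on $M_t$ degenerates in an orthonormal basis to $h_i$ differentials supported on $M_i$ ($i=1,2$). Uniformly on compacta away from the node, $g_{\mu_t}$ converges to the admissible Green's function on the stable curve $M_1 \cup_{p_1=p_2} M_2$, which splits explicitly in terms of $g_1, g_2$ and the boundary values $g_i(\,\cdot\,,p_i)$. The reparametrization $\tau = k_1 k_2 t$ is engineered exactly to absorb the anomaly $\log k_1 + \log k_2$ arising from Wentworth's plumbing estimates, so that the resulting expansions are intrinsic when stated in $\tau$.

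Third, I would substitute these expansions into Zhang's integral representation, decomposing $M_t \times M_t$ into the four sectors $M_i \times M_j$. The diagonal sectors $M_i \times M_i$ yield the term $\frac{1}{2h}\varphi(M_i)$ once one tracks the factor $(h_i/h)^2$ from $\mu_t \to (h_i/h)\mu_i$ and the associated rescaling of the spectral data entering $\varphi(M_i)$. The cross sectors $M_1 \times M_2$ and $M_2 \times M_1$ produce the $\log|\tau|$ contribution: on these products $g_{\mu_t}(x,y)$ is asymptotic to $\frac{1}{2}\log|\tau|$ plus a regular limit, and pairing this with the total $\mu_t$-masses $h_i/h$ on the two components gives precisely $-\frac{h_1 h_2}{h^2} \log|\tau|$ after division by $2h$.

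The main obstacle will be the bookkeeping near the node. One has to verify that Wentworth's error terms in the expansion of $g_{\mu_t}$ are uniformly integrable against $\mu_t \otimes \mu_t$ and vanish in the limit, and one must check that the various pieces of Zhang's $(2,2)$-form integrand contribute the $\log|\tau|$ term only from the cross sectors, with the correct coefficient $h_1 h_2/h^2$. Keeping track of how the normalizing constants $k_1, k_2$ enter at each stage is essential to produce a clean answer in the Fay-intrinsic coordinate $\tau$ rather than the auxiliary $t$.
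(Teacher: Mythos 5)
Your two asymptotic inputs are fine as far as they go, but note first that the paper itself contains no proof of Theorem \ref{finalphi}: it defers entirely to the separate preprint \cite{djasympt}. So your argument has to stand on its own, and as sketched it does not. The integral representation you invoke is, via the spectral expansion of $g_\mu$, the identity $\varphi(M) = 2\sum_{m,n}\int_{M\times M} g_\mu(x,y)\,\bigl(\omega_m\wedge\bar\omega_n\bigr)(x)\wedge\bigl(\bar\omega_m\wedge\omega_n\bigr)(y)$; the density here is the modulus squared of the Bergman kernel $\sum_m \omega_m(x)\overline{\omega_m(y)}$, not $\mu_t\otimes\mu_t$ and not a form built from $\mu$ and $e^A$ alone, since the off-diagonal products $\omega_m\wedge\bar\omega_n$ with $m\neq n$ occur. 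In a compact-type degeneration an orthonormal basis of differentials on $M_t$ splits into $h_1$ forms limiting to an orthonormal basis on $M_1$ and $h_2$ forms limiting to one on $M_2$, so on the cross sector $M_1\times M_2$ the Bergman kernel tends to zero locally uniformly away from the node. The cross sectors therefore contribute $o(1)$, not the $\log|\tau|$ term. (Your stated asymptotic $g_{\mu_t}\sim\tfrac12\log|\tau|$ there also contradicts Theorem \ref{wentworth}, which gives $-\tfrac{h_1h_2}{h^2}\log|\tau|$, and even granting your figure the arithmetic ``after division by $2h$'' does not yield $-\tfrac{h_1h_2}{h^2}\log|\tau|$.)

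The deeper problem is that once the cross sectors drop out, the away-from-the-node part of the integral gives the wrong sign. On $M_1\times M_1$ Theorem \ref{wentworth} gives $g_t(x,y) = (h_2/h)^2\log|\tau| + O(1) \to -\infty$, and pairing this constant against the total mass $2\sum_{m,n\leq h_1}\bigl|\int_{M_1}\omega_m\wedge\bar\omega_n\bigr|^2 = 8h_1 > 0$ of the limiting Bergman density (the terms $g_1(x,p)$, $g_1(y,p)$ integrate to zero) produces $8h_1(h_2/h)^2\log|\tau| \to -\infty$, and symmetrically on $M_2\times M_2$. Since $\varphi \geq 0$ and the theorem asserts $\varphi(M_t) = -\tfrac{2h_1h_2}{h}\log|\tau| + O(1) \to +\infty$, the entire positive divergence must be generated by the neck region around the node, where Wentworth's expansions are not uniform and where both $g_t$ and the Bergman kernel become singular. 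That neck computation is the real content of the theorem; your sketch relegates it to ``bookkeeping near the node,'' but without it the method as described produces a leading term of the wrong sign. To repair the approach you would need a genuine plumbing-coordinate analysis of the Bergman kernel and of $g_t$ on the neck (or else switch to an indirect route, e.g.\ via known degeneration formulas for related invariants such as Faltings' $\delta$, which is essentially what \cite{djasympt} does).
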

\begin{thm} \label{finalpsi}
For the energy $\psi(M_t)$ of the Weierstrass points of $M_t$, the limit formula
\[ \lim_{t\to 0} \left[ \psi(M_t) + \frac{h_1h_2}{h^2} \log |\tau| \right] = \frac{h_1}{h} \, \psi(M_1) + \frac{h_2}{h} \, \psi(M_2) \]
holds.
\end{thm}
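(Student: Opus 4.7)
The plan is to apply the term-by-term asymptotic expansions of $g_{\mu,t}$ in Fay's degeneration due to Wentworth \cite{we} to the sum
\[ (2h+2)\,\psi(M_t) = \sum_{i\neq j} g_{\mu,t}(w_i,w_j). \]
Since the hyperelliptic involution extends through the node, $p_1\in M_1$ and $p_2\in M_2$ are themselves Weierstrass points, so the $2h+2$ Weierstrass points of $M_t$ distribute as $2h_1+1$ points limiting to the set $\{u_1,\dots,u_{2h_1+1}\}$ of Weierstrass points of $M_1$ distinct from $p_1$, together with $2h_2+1$ points limiting to the analogous set $\{v_1,\dots,v_{2h_2+1}\}$ on $M_2$. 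I would split the double sum into three partial sums according to whether $(w_i,w_j)$ has both endpoints limiting to $M_1$, both to $M_2$, or one to each.

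For the same-component pairs, Wentworth's expansion produces a finite limit which is a genus-weighted combination of $g_i(u,u')$ with $g_i(\cdot,p_i)$-values; for the cross pairs it produces the divergent contribution $-\frac{h_1h_2}{h^2}\log|\tau|$ together with a finite limit involving $g_1(\cdot,p_1)$ and $g_2(\cdot,p_2)$. Summing over all $2(2h_1+1)(2h_2+1)$ ordered cross pairs, the divergent parts assemble into exactly $-(2h+2)\frac{h_1h_2}{h^2}\log|\tau|$, precisely matching the counterterm in the statement. The finite parts then collapse via the identity
\[ \sum_{u\in W(M_i)\setminus\{p_i\}} g_i(p_i,u) = \psi(M_i), \]
obtained by summing Theorem A applied to $M_i$ over the $2h_i+1$ Weierstrass points of $M_i$ distinct from $p_i$ and invoking the product relation $\prod_{u\neq p_i}\delta^{(i)}_{p_i,u}=1$ noted in the Introduction. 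An immediate consequence is the companion identity
\[ \sum_{\substack{u,u'\in W(M_i)\setminus\{p_i\}\\ u\neq u'}} g_i(u,u') = (2h_i+2)\psi(M_i) - 2\psi(M_i) = 2h_i\,\psi(M_i), \]
which together absorb every boundary expression into weighted multiples of $\psi(M_1)$ and $\psi(M_2)$.

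The main obstacle will be the combinatorial bookkeeping: after substituting Wentworth's local formulas into the three partial sums and simplifying via the two identities above, one has to verify that the coefficients of $\psi(M_i)$ emerge as $(2h+2)\cdot\frac{h_i}{h}$, and that all the universal constants (in particular the $\log k_i$ absorbed into $\log|\tau|$ via the reparametrization $\tau = k_1 k_2 t$) cancel without residue. The $\frac{h_i}{h}$ weighting reflects the fact that the limiting canonical measure on $M_i$ has total mass $\frac{h_i}{h}$, which is exactly the weighting that enters Wentworth's asymptotic expansions; the cancellation of the $\log k_i$ terms is then a formal consequence of the symmetric way they appear in the Arakelov kernel.
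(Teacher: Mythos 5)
Your overall strategy coincides with the paper's: split $(2h+2)\,\psi(M_t)=\sum_{i\neq j}g_t(w_i,w_j)$ into same-component and cross-component pairs, substitute Wentworth's asymptotics, and collapse the boundary expressions using the two identities $\sum_{u\neq p_i}g_i(p_i,u)=\psi(M_i)$ and $\sum_{u\neq u'}g_i(u,u')=2h_i\,\psi(M_i)$. The second identity is exactly Lemma \ref{small}, and your derivation of the first from Theorem A together with $\prod_{u\neq p_i}\delta^{(i)}_{p_iu}=1$ is a legitimate variant of the paper's appeal to \cite{djhyp} (with the caveat that for $h_i=1$ one must fall back on translation invariance, since Theorem A is stated for genus at least $2$).

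There is, however, a concrete error in your account of where the divergence comes from. Wentworth's expansion for two sections both specializing onto $M_1$ reads
\[ g_t(x,y)=\Bigl(\frac{h_2}{h}\Bigr)^2\log|\tau|+g_1(x,y)-\frac{h_2}{h}\bigl(g_1(x,p)+g_1(y,p)\bigr)+o(1)\,, \]
so the same-component pairs do \emph{not} have finite limits: each such pair contributes a divergent term $+(h_2/h)^2\log|\tau|$ (resp.\ $+(h_1/h)^2\log|\tau|$ on the $M_2$ side). Correspondingly, the ordered cross pairs alone contribute $-2(2h_1+1)(2h_2+1)\frac{h_1h_2}{h^2}\log|\tau|$, which is never equal to $-(2h+2)\frac{h_1h_2}{h^2}\log|\tau|$ for $h_1,h_2\geq 1$; so your claim that the cross pairs by themselves reproduce the counterterm is false. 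The correct counterterm emerges only from the combination of all three partial sums, via the identity
\[ 2h_1(2h_1+1)h_2^2+2h_2(2h_2+1)h_1^2-2(2h_1+1)(2h_2+1)h_1h_2=-(2h+2)\,h_1h_2\,. \]
As written your bookkeeping would not close; once the same-component divergences are restored, the rest of your plan goes through and reproduces the paper's computation. A minor further remark: since the reparametrization $\tau=k_1k_2t$ is already built into Wentworth's formulas as quoted, there are no residual $\log k_i$ terms to cancel.
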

We deduce Theorem B from Theorems \ref{finalphi} and \ref{finalpsi} by induction on $h$. Note that the assertion in Theorem B is true for $h=1$ by (\ref{ellipticsum}) and the fact that $\varphi$ vanishes for $h=1$. Now assume that the assertion in Theorem B is true for all genera smaller than $h$. Consider a holomorphic family $M_t$ of hyperelliptic Riemann surfaces of genus $h$ as above. We find
\begin{align*}
\lim_{t\to 0} \left[ \psi(M_t) - \frac{1}{2h}\varphi(M_t) \right] = & \frac{h_1}{h}\psi(M_1) - \frac{1}{2h} \varphi(M_1) + \frac{h_2}{h} \psi(M_2) - \frac{1}{2h} \varphi(M_2) \\
= & \frac{h_1}{h}\left( \psi(M_1)-\frac{1}{2h_1}\varphi(M_1)\right) +
\frac{h_2}{h}\left( \psi(M_2)-\frac{1}{2h_2}\varphi(M_2)\right) \\
= & \frac{h_1}{h} \log 2 + \frac{h_2}{h}\log 2 = \log 2 \, .
\end{align*}
Let $\overline{\hh}$ be the closure of $\hh$ in the Deligne-Mumford compactification $\overline{\mm}$ of the moduli space $\mm$ of compact Riemann surfaces of genus $h$.
Let $\widetilde{\hh} \subset \overline{\hh}$ be the partial compactification of $\hh$ given by stable curves of compact type (i.e. whose jacobians are principally polarized abelian varieties). Write $\chi = \psi-\frac{1}{2h}\varphi$. The above calculation implies that for hyperelliptic Riemann surfaces in $\hh$ tending to a point in the boundary of $\hh$ in $\widetilde{\hh}$, the function $\chi$ tends to $\log 2$. In particular the function $\chi$ extends as a continuous function over $\widetilde{\hh}$. By Proposition \ref{pluriharmonic} we know that $\chi$ is a pluriharmonic function on $\hh$. By the Riemann extension theorem we find that $\chi$ extends as a pluriharmonic function on $\widetilde{\hh}$, which is a constant function with value $\log 2$ on $\widetilde{\hh} \setminus \hh$.

Let $\aa^*$ be the Satake compactification of the moduli space $\aa$ of principally polarized abelian varieties of genus $h$. This is a normal projective variety \cite{bb}.  There exists a holomorphic map $t \colon \overline{\hh} \to \aa^*$ given by assigning to a stable curve $M$ the jacobian of its normalization. The map $t$ is injective on $\hh$, and the function $\chi$ descends as a pluriharmonic function on $t(\widetilde{\hh})$, which is a constant function with value $\log 2$ on $t(\widetilde{\hh} \setminus \hh)$.

The boundary of $\widetilde{\hh}$ in $\overline{\hh}$ is a finite union of divisors $\Xi_i$, see \cite{ch} for example. From the moduli theoretic description of the points in the divisors $\Xi_i$ it follows that the map $t$ restricted to the boundary has positive dimensional fibers. Hence the boundary of $t(\widetilde{\hh})$ in the closure of $t(\widetilde{\hh})$ in $\aa^*$ has codimension at least two. Let $M$ be a point in $t(\hh)$. Using that $\aa^*$ is projective one sees from intersecting with suitable hyperplanes in a projective embedding that the variety $t(\widetilde{\hh})$ contains a complete curve $X$ passing through $M$. As $t(\hh)$ is affine, the curve $X$ necessarily contains a point of $t(\widetilde{\hh} \setminus \hh)$. Let $Y \to X$ be the normalization of $X$. The function $\chi$ pulls back to a pluriharmonic function on $Y$, which is then necessarily constant. It follows that the value of $\chi$ at $M$ is equal to $\log 2$, and Theorem B is proven.

For the proof of Theorem \ref{finalphi} we refer to \cite{djasympt}. It remains to prove Theorem \ref{finalpsi}. We need the next result, which follows from \cite{we}, Theorem 6.10.
\begin{thm} \label{wentworth} Let $g_t$ be the canonical Green's function on $M_t$. Denote by $p$ both the point $p_1$ on $M_1$ and the point $p_2$ on $M_2$. Then for local distinct holomorphic sections $x,y$ of the family $M_t$ specializing onto $M_1$ we have
\[ \lim_{t\to 0} \left[ g_t(x,y) - \left( \frac{h_2}{h} \right)^2 \log |\tau| \right] = g_1(x,y) - \frac{h_2}{h}\left(g_1(x,p) + g_1(y,p) \right) \, . \]
For local distinct holomorphic sections $x,y$ with $x$ specializing onto $M_1$ and $y$ specializing onto $M_2$ we have
\[ \lim_{t \to 0} \left[ g_t(x,y) + \frac{h_1h_2}{h^2} \log |\tau| \right] = \frac{h_1}{h}g_1(x,p) + \frac{h_2}{h} g_2(y,p) \, . \]
\end{thm}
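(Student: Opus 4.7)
My plan is to prove Theorem \ref{wentworth} by constructing, for each specialization pattern, an explicit candidate $G_t$ on $M_t$ assembled from $g_1$, $g_2$, and cylinder data on the plumbing collar $z_1 z_2 = t$, and then invoking the uniqueness characterization (1)--(4) of the canonical Green's function to conclude $g_t = G_t + o(1)$ as $t \to 0$.

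The key preliminary is the Fay asymptotic analysis of holomorphic differentials on $M_t$. A suitably normalized basis of $H^0(M_t,\Omega^1)$ splits into $h_i$ differentials concentrating on $M_i$, with the mixing between the two components being of order $(\log|t|^{-1})^{-1}$. It follows that $\mu_t$ restricts on $M_i$ away from the node to $(h_i/h)\mu_i + o(1)$, while the residual mass of order $h_1 h_2/h^2$ is carried on the shrinking collar, where the concentrating differentials behave like $c\,dz/z$.

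For the first limit I would set
\[ G_t(x,y) = g_1(x,y) - \frac{h_2}{h}\bigl(g_1(x,p) + g_1(y,p)\bigr) + \Bigl(\frac{h_2}{h}\Bigr)^2 \log|\tau| \]
on $M_1 \setminus \{p\}$, and the analogous Case~2 expression on $M_2 \setminus \{p\}$; a direct computation using $z_1 z_2 = t$ and $\log|\tau| = \log k_1 + \log k_2 + \log|t|$ verifies that the two pieces match continuously across the collar. Property (4) is inherited from $g_1$; property (3) is built in by the symmetric form of the ansatz. Properties (1) and (2) are what pin down the coefficients: the normalization $\int_{M_t} G_t \mu_t = 0$, combined with the decomposition of $\mu_t$ above and the vanishing of $\int g_i(z,\cdot)\,\mu_i$, forces the factors $-h_2/h$ and $(h_2/h)^2$ once the $\log|\tau|$ contributions are collected.

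The main obstacle, and the core content of Wentworth's Theorem~6.10 in \cite{we}, is verifying the Laplace equation (1), namely $\partial\overline{\partial} G_t = \pi\sqrt{-1}(\mu_t - \delta_x)$, on the plumbing collar up to error $o(1)$. This requires uniform control of the Bergman kernel near the node and of the mixing between $M_1$-type and $M_2$-type differentials, and is carried out in \cite{we} via Faltings' representation of the canonical Green's function through theta functions and the prime form, combined with Fay's asymptotic expansions for these objects in a degenerating family.
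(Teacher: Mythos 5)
The paper gives no argument for this statement at all: it is quoted directly as a consequence of Wentworth's Theorem~6.10 in \cite{we}, so your ultimate deferral of the analytic core to that theorem is exactly what the paper does, and your ansatz, the collar-matching computation via $\log|\tau|=\log k_1+\log k_2+\log|t|$, and the normalization check correctly reproduce the shape of the formulas. Two of your supporting claims, however, describe the wrong kind of degeneration. For a \emph{separating} node (the compact-type case at hand) the off-diagonal mixing of the normalized differentials in Fay's expansion is $O(|t|)$, not $O\bigl((\log|t|^{-1})^{-1}\bigr)$, and no differential acquires a $dz/z$ singularity at the node, so there is no residual mass $h_1h_2/h^2$ on the collar; indeed the limit masses $h_1/h$ and $h_2/h$ on the two components already sum to $1$, which is precisely what makes your own computation of $\int_{M_t}G_t\,\mu_t=0$ close up. (The $dz/z$ behavior and logarithmic mixing belong to the non-separating case.) The $\log|\tau|$ divergence comes instead from the small eigenvalue $\lambda_1(M_t)\sim h^2/(h_1h_2\log|t|^{-1})$ of $\Delta_{\mu_t}$, whose eigenfunction is nearly locally constant on each component; this is also why your appeal to ``uniqueness up to $o(1)$'' is not free: an error $\epsilon_t$ in property (1) perturbs $g_t$ by roughly $\int g_t\,\epsilon_t$, and $g_t$ itself has sup norm of order $\log|\tau|$, so one needs quantitative control that only Wentworth's explicit theta-function/prime-form computation supplies --- which is the citation both you and the paper fall back on.
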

Now let $w_1,\ldots,w_{2h+2}$ be the Weierstrass points on $M_t$. As $t \to 0$ a portion of $2h_1+1$ of them degenerate onto $M_1$, and $2h_2+1$ of them degenerate onto $M_2$. The point $p$ is a Weierstrass point of both $M_1,M_2$. Let $I$ be the set of indices such that $i \in I$ if and only if $w_i$ degenerates onto $M_1$.
\begin{lem} \label{small} The equality
\[ \sum_{i,j \in I \atop i \neq j} g_1(w_i,w_j) = 2h_1 \, \psi(M_1) \]
holds.
\end{lem}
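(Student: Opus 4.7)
The plan is to reduce the identity to a statement about the Weierstrass points of the smooth hyperelliptic surface $M_1$ alone, and then use the symmetry of Green's function sums over Weierstrass points provided by Theorem 1.4 of \cite{djhyp}. Recall that $M_1$ has genus $h_1$, hence $2h_1+2$ Weierstrass points, one of which is $p$; the remaining $2h_1+1$ are precisely the limits of $w_i$ for $i \in I$. So the full Weierstrass set of $M_1$ is $\{w_i : i \in I\} \cup \{p\}$, and by the definition of $\psi(M_1)$,
\[ (2h_1+2)\,\psi(M_1) = \sum_{i,j\in I,\, i\neq j} g_1(w_i,w_j) + 2\sum_{i\in I} g_1(w_i,p) \, , \]
where the factor $2$ on the right comes from the ordered pairs $(i,p)$ and $(p,i)$ contributing equally by symmetry of $g_1$.

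Next I would invoke the key input from \cite{djhyp}, Theorem~1.4, already used in the proof of Theorem A: for any hyperelliptic Riemann surface the sum $\sum_{k\neq r} g(w_r,w_k)$ taken over all Weierstrass points $w_k\neq w_r$ is independent of the chosen Weierstrass point $w_r$. Summing this constant over all $2h_1+2$ choices of $w_r$ and comparing with the definition of $\psi(M_1)$ shows that the constant equals $\psi(M_1)$ itself. Applying this with $w_r=p$ gives
\[ \sum_{i\in I} g_1(w_i,p) = \psi(M_1) \, . \]

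Substituting this last identity into the displayed expansion of $(2h_1+2)\psi(M_1)$ yields
\[ \sum_{i,j\in I,\, i\neq j} g_1(w_i,w_j) = (2h_1+2)\psi(M_1) - 2\psi(M_1) = 2h_1\,\psi(M_1) \, , \]
which is the desired equality. No serious obstacle is anticipated; the only non-trivial ingredient is the symmetry statement from \cite{djhyp}, and once that is invoked the identity is a straightforward bookkeeping exercise separating the pairs involving $p$ from those that do not.
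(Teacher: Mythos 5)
Your proof is correct and follows essentially the same route as the paper: decompose $(2h_1+2)\psi(M_1)$ over all pairs of Weierstrass points of $M_1$, split off the pairs involving $p$, and identify $\sum_{i\in I} g_1(w_i,p)$ with $\psi(M_1)$. The paper performs this identification silently in a one-line computation, whereas you explicitly justify it via the independence result of \cite{djhyp}, Theorem~1.4 — a worthwhile clarification but not a different argument.
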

\begin{proof} We calculate
\[ (2h_1+2) \, \psi(M_1) = \sum_{i,j \in I \cup \{p\} \atop i \neq j} g_1(w_i,w_j) =
\sum_{i,j \in I \atop i \neq j} g_1(w_i,w_j) + 2 \, \psi(M_1) \, . \]
The required equality follows.
\end{proof}
Using Theorem \ref{wentworth} we find
\begin{align*}
  (2h+2) \, \psi(M_t)   =  & \sum_{i \neq j} g_t(w_i,w_j)  \\
   =  &  \sum_{i,j \in I \atop i \neq j} g_t(w_i,w_j) + \sum_{i,j \in I^c \atop i \neq j} g_t(w_i,w_j) + 2 \sum_{i \in I \atop j \in I^c} g_t(w_i,w_j)  \\
  =& \sum_{i, j \in I \atop i \neq j} \left( \left( \frac{h_2}{h} \right)^2 \log|\tau| + g_1(w_i,w_j) - \frac{h_2}{h} \left( g_1(w_i,p) + g_1 (w_j,p) \right) \right) \\
  & + \sum_{i,j \in I^c \atop i \neq j} \left( \left( \frac{h_1}{h} \right)^2 \log |\tau| + g_2(w_i,w_j) - \frac{h_1}{h} \left(g_2(w_i,p) + g_2(w_j,p) \right) \right) \\
  & + 2 \sum_{i \in I \atop j \in I^c} \left( -\frac{h_1 h_2}{h^2} \log |\tau| + \frac{h_1}{h} g_1(w_i,p) + \frac{h_2}{h} g_2(w_j,p) \right) + o(1)
\end{align*}
as $t \to 0$. With the help of Lemma \ref{small} this can be rewritten as
\begin{align*}
 (2h+2) \, \psi(M_t) =&
    \left( (2h_1+1)\, 2h_1 \left( \frac{h_2}{h} \right)^2 + (2h_2+1) \, 2h_2 \left( \frac{h_1}{h} \right)^2 \right)\log |\tau| \\
 & - 2(2h_1+1)(2h_2+1) \frac{h_1h_2}{h^2} \, \log |\tau| \\
&  + \left( 2h_1 - \frac{4h_1h_2}{h} + \frac{2h_1(2h_2+1)}{h} \right) \psi(M_1) \\
&  + \left( 2h_2 - \frac{4h_1h_2}{h} + \frac{2h_2(2h_1+1)}{h} \right) \psi(M_2) + o(1) \\
= &    (2h+2)\left(-\frac{h_1h_2}{h^2} \log |\tau| + \frac{h_1}{h}\psi(M_1) + \frac{h_2}{h} \psi(M_2) \right) + o(1)
\end{align*}
as $t \to 0$. Theorem \ref{finalpsi} follows.

\vspace{0.5cm}

\noindent Address of the author:\\ \\
Mathematical Institute \\
University of Leiden \\
PO Box 9512 \\
2300 RA Leiden \\
The Netherlands \\
Email: \verb+rdejong@math.leidenuniv.nl+

\end{document}